\newtheorem{theorem}{Theorem}[section]
\newtheorem{lemm}[theorem]{Lemma}
\newtheorem{corr}[theorem]{Corollary}
\title[Primitive elements of $\mathds{F}_{q^r}$ avoiding affine hyperplanes for $q=4$ and $q=5$.]{Primitive elements of finite fields $\mathds{F}_{q^r}$ avoiding affine hyperplanes for $q=4$ and $q=5$.}
\author[P. A. Grzywaczyk]{Philipp A. Grzywaczyk}
\address{Institut für Algebra\\Technische Universität Dresden\\
	01069 Dresden, Germany}
\urladdr{https://tu-dresden.de/mn/math/algebra}
\email{philipp$\_$alexander.grzywaczyk@tu-dresden.de}
\author[A. Winterhof]{Arne Winterhof}
\address{Johann Radon Institute for Computational and Applied Mathematics\\ Austrian Academy of Sciences\\
	4040 Linz, Austria}
\urladdr{https://www.oeaw.ac.at/ricam/}
\email{arne.winterhof@ricam.oeaw.ac.at}
\date{\today}
\subjclass[2020]{11T30 (11T24)}
\keywords{Finite fields, primitive elements, character sums}
\thanks{The first author has been funded by the European Research Council (Project POCOCOP, ERC Synergy Grant 101071674). Views and opinions expressed are however those of the authors only and do not necessarily reflect those of the European Union or the European Research Council Executive Agency. Neither the European Union nor the granting authority can be held responsible for them.}
\let\thetitle\@title
\let\theauthor\@author
\begin{document}
\bibliographystyle{amsalpha}
\maketitle
\begin{abstract}
	For a finite field $\mathds{F}_{q^r}$ with fixed $q$ and $r$ sufficiently large, we prove the existence of a primitive element outside of a set of $r$ many affine hyperplanes for $q=4$ and $q=5$.
	This complements earlier results by Fernandes and Reis for $q\ge 7$. 
	For $q=3$ the analogous result can be derived from a very recent bound on character sums of Iyer and Shparlinski.
	 {For $q=2$ the set consists only of a single element, and such a result is thus not possible.} 
\end{abstract}



\section{Introduction}
Let $q$ be a prime power and $r\geq 2$ be an integer. The finite field $\mathds{F}_{q^r}$ with~$q^r$ elements can be considered an $r$-dimensional $\mathds{F}_q$-vector space. By fixing an $\mathds{F}_q$-basis $\mathfrak{B}=\{\beta_1,...,\beta_r\}$ of $\mathds{F}_{q^r}$, we can write each element $\alpha\in\mathds{F}_{q^r}$ uniquely as \[\alpha=\sum_{i=1}^r a_i \beta_i, \quad a_i\in\mathds{F}_q.\] 

In \cite{f-reis}, Fernandes and Reis developed the idea of removing $r$ $\mathds{F}_q$-affine hyperplanes in general position from $\mathds{F}_{q^r}$ and, for $q\geq 7$, proved concrete and asymptotic results on the existence of primitive elements in the set $$\mathcal{S}_\mathcal{C}^\ast := \mathds{F}_{q^r}^{ {*}} \setminus\bigcup_{i=1}^r\mathcal{A}_i,$$ where $\mathcal{C}=\{\mathcal{A}_1,...,\mathcal{A}_r\}$ is a set of $\mathds{F}_q$-affine hyperplanes in general position, i.e.\ 
 { for} $1 \leq k \leq  {r}$, the intersection of any $k$ distinct elements of $\mathcal{C}$ is an $\mathds{F}_q$-affine space of dimension $r - k$. 

 {Since the results do not depend on the concrete value of the elements $c_1,...,c_r$, we consider them arbitrary but fixed for the remainder of this paper.

It is evident that  { the hyperplanes 
\begin{equation}\label{Aj}\mathcal{A}_j=\left\{\sum_{i=1}^r a_i\beta_i | a_j = c_j\right\},\quad c_j \in \mathds{F}_q,\quad j=1,2,\ldots,r,
\end{equation} are} 
in general position  {, since the set $\{\beta_1,\ldots,\beta_r\}$ constitutes a basis.

Conversely,} any set $\mathcal{A}_1,...,\mathcal{A}_r$ of affine hyperplanes in general position can be written in  { this way
for some basis $\{\beta_1,\ldots,\beta_r\}$.  More precisely, by definition, all $r$ hyperplanes in general position intersect in a single point $\gamma$ and this intersection is of dimension $0$. The~$r$ different intersections 
$$\bigcap_{\substack{i=1\\i\ne j}}^r {\mathcal{A}}_i,\quad j=1,2,\ldots,r,$$
of each $r-1$ hyperplanes are of dimension~$1$ and thus lines 
$${\mathcal{L}}_j=\{a\beta_j+\gamma: a \in \mathbb{F}_q\}$$ for some $\beta_j\in \mathbb{F}_{q^r}^*$, $j=1,...,r$. 
Obviously, for $i\ne j$ we have $\mathcal{L}_i\subseteq \mathcal{A}_j$, thus
$$\left\{\sum_{\substack{i=1\\ i\ne j}}^r a_i \beta_i+\gamma : a_i\in \mathbb{F}_q\right\}\subseteq \mathcal{A}_j,\quad j=1,\ldots r,$$
and it remains to show that the dimensions of both sides are the same, that is, $\beta_1,\ldots,\beta_r$ are linearly independent.
If $\beta_1,\ldots,\beta_r$ were linearly dependent, then for some $j$ and $a_i\in \mathbb{F}_q$ we would get
$$\beta_j=\sum_{\substack{i=1\\ i\ne j}}^r a_i\beta_i$$
and
$\mathcal{L}_j\subseteq \mathcal{A}_j.$
However, 
$$\mathcal{L}_j \cap \mathcal{A}_j=\bigcap_{i=1}^r \mathcal{A}_i=\gamma,$$
a contradiction.
Write $$\gamma=\sum_{j=1}^r c_j \beta_j, \quad c_j\in \mathbb{F}_q.$$
Now 
$$\mathcal{A}_j=\left\{\sum_{\substack{i=1\\ i\ne j}}^r a_i \beta_i+\gamma : a_i\in \mathbb{F}_q\right\}=\left\{\sum_{i=1}^r a_i\beta_i: a_j=c_j\right\},\quad j=1,2,\ldots,r.$$
Hence, for the rest of the paper we may assume \eqref{Aj}.}}

In particular, Fernandes and Reis proved for a fixed $q\ge 7$ the existence of a primitive element in $\mathcal{S}_\mathcal{C}^\ast$ if $r$ is sufficiently large.
We complement this result and obtain analogous existence theorems for primitive elements for $q=4$ and $q=5$. Our approach relies on using a different bound for the character sum 
$$s(\mathcal{S}_{\mathcal{C}}^\ast,\chi):=\left| \sum_{y\in\mathcal{S}_{\mathcal{C}}^\ast} \chi(y) \right|$$ 
appearing in the proof. 

A very recent bound of Iyer and Shparlinski \cite{is} can be used to fix the case $q=3$.  {The case $q=2$ is special, since removing $r$ affine hyperplanes from $\mathds{F}_{2^r}$ yields a set with only one element.}

In Section \ref{sec2} we prove a new upper bound for the character sum over the set in question, $\mathcal{S}_\mathcal{C}^\ast$. In Section \ref{sec3}, we obtain results for the existence of primitive elements in $\mathcal{S}_\mathcal{C}^\ast$.

\section{Preliminaries}\label{sec2}
A very important tool to study the existence of primitive elements in subsets of the
multiplicative group of finite fields is the
formula of Vinogradov, which was first published by Edmund Landau in \cite[p. 179,
Thm.\ 496]{landau}, see also \cite[Prop.\ 10.2.5]{HJ}.
\begin{lemm}\label{vinogradov}
 For $U\subseteq \mathds{F}_q^\ast$, the number $\mathcal{P}(U)$ of primitive
	elements in $U$ is given by
	\[ \mathcal{P}(U) = \frac{\varphi(q-1)}{q-1} \sum_{d|(q-1)} \left(\frac{\mu
		(d)}{\varphi(d)} \sum_{\substack{\chi\in\widehat{\mathds{F}}_q^\ast\\ ord\chi =
			d}} \sum_{x\in U} \chi(x)\right),\]
		where $\varphi$ denotes Euler's totient function, $\mu$ the Möbius-function and $\widehat{\mathds{F}}_q^\ast$ the group of multiplicative characters of $\mathds{F}_q$.
\end{lemm}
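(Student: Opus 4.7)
The plan is to establish the formula pointwise, by showing that for every $x \in \mathds{F}_q^*$ the indicator
\[\rho(x) := \mathds{1}[x \text{ is primitive}] = \frac{\varphi(q-1)}{q-1} \sum_{d \mid q-1} \frac{\mu(d)}{\varphi(d)} \sum_{\mathrm{ord}\,\chi = d} \chi(x),\]
and then summing over $x \in U$ to obtain $\mathcal{P}(U)$. For the pointwise identity I would begin with Möbius inversion: if $x$ has order $e$, then $x$ is primitive iff $(q-1)/e = 1$, whence $\rho(x) = \sum_{k \mid (q-1)/e} \mu(k)$. Since $k \mid (q-1)/e$ is equivalent to $x$ lying in the subgroup $(\mathds{F}_q^*)^k$ of $k$-th powers, this rewrites as $\rho(x) = \sum_{k \mid q-1} \mu(k)\,\mathds{1}[x \in (\mathds{F}_q^*)^k]$.

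Next I would invoke character orthogonality. Because $\{\chi \in \widehat{\mathds{F}}_q^*:\chi^k = 1\}$ has order $k$ and is exactly the annihilator of $(\mathds{F}_q^*)^k$, the indicator of $k$-th powers equals $\tfrac{1}{k}\sum_{\chi^k=1}\chi(x) = \tfrac{1}{k}\sum_{d \mid k}\sum_{\mathrm{ord}\,\chi = d}\chi(x)$. Substituting this into the previous display and interchanging the two outer summations gives
\[\rho(x) = \sum_{d \mid q-1} \left( \sum_{\substack{k \mid q-1 \\ d \mid k}} \frac{\mu(k)}{k} \right) \sum_{\mathrm{ord}\,\chi = d} \chi(x),\]
so the proof reduces to matching the inner coefficient with $\frac{\varphi(q-1)\,\mu(d)}{(q-1)\,\varphi(d)}$.

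The main obstacle is this last coefficient identity, but it follows by a short multiplicativity argument. Writing $k = d\ell$ with $\ell \mid (q-1)/d$, the factor $\mu(k)$ forces $k$ squarefree and hence $\gcd(d,\ell)=1$ and $\mu(d\ell) = \mu(d)\mu(\ell)$; the coefficient becomes $\frac{\mu(d)}{d}\sum_{\ell \mid (q-1)/d,\,\gcd(\ell,d)=1}\mu(\ell)/\ell$. The remaining $\ell$-sum expands as the Euler product $\prod_{p \mid q-1,\,p\nmid d}(1-1/p)$, which, using $\varphi(m)/m = \prod_{p\mid m}(1-1/p)$ for $m=q-1$ and $m=d$, equals $\frac{\varphi(q-1)}{q-1}\cdot\frac{d}{\varphi(d)}$. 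Multiplying by $\mu(d)/d$ yields the desired expression and completes the proof. Everything apart from this final multiplicativity check is formal manipulation of characters and Möbius inversion.
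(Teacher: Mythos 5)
Your proof is correct. The paper does not prove this lemma at all --- it is quoted from the literature (Landau, and \cite[Prop.\ 10.2.5]{HJ}) --- and your argument is the standard derivation: Möbius inversion for the primitivity indicator, orthogonality to express the indicator of $(\mathds{F}_q^\ast)^k$ as $\frac{1}{k}\sum_{\chi^k=1}\chi(x)$, and the coefficient identity $\sum_{k\mid q-1,\ d\mid k}\mu(k)/k = \frac{\varphi(q-1)}{q-1}\cdot\frac{\mu(d)}{\varphi(d)}$, which you verify correctly by multiplicativity (including, implicitly, the degenerate case where $d$ is not squarefree, so both sides vanish). Nothing is missing.
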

At the center of this formula we find a character sum over the respective subset. Notice that we can also write the set $\mathcal{S}_\mathcal{C}^\ast=\mathds{F}_{q^r} \setminus
\bigcup_{i=1}^r\mathcal{A}_i$ as 
$$\mathcal{S}_\mathcal{C}^\ast=\left\{\left.\sum_{i=1}^r a_i\beta_i\right|a_i\neq c_i\right\},$$ where each $c_i\in \mathbb{F}_q$ defines an affine hyperplane $\mathcal{A}_i$. We adapt the upper bound for character sums over sparse elements from \cite[Thm. 2.5]{msw} to this problem. Note that \cite[Thm.\ 2.5]{msw} deals only with $q=2$, but the proof can be adjusted to any $q$.
For the convenience of the reader, we prove this adapted version here for our special situation.
\begin{theorem}\label{char_sum_pg}
    Take
	$\mathcal{C}=\{\mathcal{A}_1,...,\mathcal{A}_r\}$ a set of $r$
	$\mathds{F}_q$-affine hyperplanes in $\mathds{F}_{q^r}$ defined by $c_1,\ldots,c_r$. Then for
	$\mathcal{S}_\mathcal{C}^\ast=\mathds{F}_{q^r} \setminus
	\bigcup_{i=1}^r\mathcal{A}_i$ and a non-trivial multiplicative character $\chi$ of
	$\mathds{F}_{q^r}$ we have 
    \[s(\mathcal{S}_\mathcal{C}^\ast,\chi) \leq \sqrt{3}
	(q-1)^{r/2} q^{\lceil3r/4\rceil/2}.\]
\end{theorem}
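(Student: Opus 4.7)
The strategy is the classical Cauchy--Schwarz reduction to a complete mixed character sum that can be controlled by Weil's bound. Split the basis indices into the first $k = \lceil 3r/4 \rceil$ and the last $\ell = \lfloor r/4 \rfloor$, and set
\[V_1 = \bigoplus_{i=1}^k \mathds{F}_q \beta_i, \qquad V_2 = \bigoplus_{i=k+1}^r \mathds{F}_q \beta_i,\]
with $\mathcal{S}_j \subseteq V_j$ the analogue of $\mathcal{S}_\mathcal{C}^\ast$ on the respective coordinate block, so that $\mathcal{S}_\mathcal{C}^\ast = \mathcal{S}_1 + \mathcal{S}_2$, $|\mathcal{S}_1| = (q-1)^k$ and $|\mathcal{S}_2| = (q-1)^\ell$. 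Applying Cauchy--Schwarz to the outer sum over $\mathcal{S}_1$ and then enlarging the range of $x_1$ to $V_1$ (legitimate since the summand is a modulus squared) gives
\[s(\mathcal{S}_\mathcal{C}^\ast, \chi)^2 \leq (q-1)^k \sum_{x_2, x_2' \in \mathcal{S}_2} \Sigma(x_2, x_2'),\]
where $\Sigma(x_2, x_2') := \sum_{x_1 \in V_1} \chi(x_1 + x_2)\,\overline{\chi(x_1 + x_2')}$. The diagonal contribution $x_2 = x_2'$ is trivially at most $|V_1|\cdot|\mathcal{S}_2| = (q-1)^\ell q^k$, hence at most $(q-1)^r q^{\lceil 3r/4 \rceil}$ to $s^2$, which is already the leading term of the target.

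The main work is to bound $|\Sigma(x_2,x_2')|$ for $x_2 \neq x_2'$. Expanding $\mathds{1}_{V_1}$ by Fourier inversion (viewing $V_1$ as the joint kernel of $\ell$ linear forms) as an average of additive characters, one obtains
\[\Sigma(x_2,x_2') = \frac{1}{q^\ell}\sum_{y \in W}\sum_{x \in \mathds{F}_{q^r}} \chi\!\left(\frac{x+x_2}{x+x_2'}\right)\psi(yx),\]
for an $\ell$-dimensional subspace $W \subseteq \mathds{F}_{q^r}$ and a fixed nontrivial additive character $\psi$. The $y=0$ term equals $-1$ via a standard Möbius substitution (the sum $\sum_{v\neq 0}\chi(1+v) = -1$). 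For $y \neq 0$, the change of variable $u = 1/(x+x_2')$ transforms the inner sum into a twisted Kloosterman sum of the form $\sum_{u\neq 0}\chi(1+\lambda u)\,\psi(\eta u^{-1})$ with $\lambda = x_2 - x_2' \neq 0$ and $\eta$ depending on $y$, which Weil's theorem bounds by $2q^{r/2}$. Averaging over $W$ yields $|\Sigma(x_2,x_2')| \leq 2q^{r/2}$, so the off-diagonal contribution to $s^2$ is at most $2(q-1)^{r+\ell}q^{r/2}$.

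Finally, since $\ell \leq r/4$ one has $(q-1)^\ell \leq q^\ell \leq q^{\lceil 3r/4 \rceil - r/2}$, so the off-diagonal bound simplifies to $2(q-1)^r q^{\lceil 3r/4 \rceil}$. Combining with the diagonal yields $s^2 \leq 3(q-1)^r q^{\lceil 3r/4\rceil}$, and the theorem follows upon taking square roots. The delicate point I foresee is extracting exactly the constant $2$ (rather than $3$ or $4$) from the Weil estimate on the mixed character sum, since this is precisely what produces the advertised factor $\sqrt{3}$.
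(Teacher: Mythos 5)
Your proof is correct and takes essentially the same route as the paper: the same split of the coordinates into a block of size $k=\lceil 3r/4\rceil$ and its complement, Cauchy--Schwarz over the large block, completion of that block to the full $k$-dimensional subspace, and the same diagonal/off-diagonal analysis yielding $s^2\le (1+2)(q-1)^r q^{\lceil 3r/4\rceil}$. The only difference is that you sketch a from-scratch derivation of the key estimate $|\Sigma(x_2,x_2')|\le 2q^{r/2}$ (via Fourier expansion of the subspace indicator and Weil's bound for the resulting twisted Kloosterman sums), whereas the paper simply quotes this complete-sum bound from the literature.
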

\begin{proof}
	Recall that we can write every $\alpha\in\mathcal{S}_\mathcal{C}^\ast$ as
	$\alpha=\sum_{i=1}^r a_i\beta_i$, with $a_i\neq c_i$. We can decompose $\mathcal{S}_\mathcal{C}^\ast$ into the sum of two sets
	\[\mathcal{V} =  \left\{\left.\sum_{i=1}^k a_i\beta_i\right|a_i\neq c_i\right\}  \] and
	\[\mathcal{W} = \left\{\left.\sum_{i=k+1}^r a_i\beta_i\right|a_i\neq c_i\right\}\] for a
	fixed value $k$ with $1\le k\le r$, which we will determine later. We get \begin{align*}
	s(\mathcal{S}_\mathcal{C}^\ast,\chi) &= \left| \sum_{v\in\mathcal{V}}
	\sum_{w\in\mathcal{W}} \chi(v+w)\right| \leq \sum_{v\in\mathcal{V}} 1 \left|
	\sum_{w\in\mathcal{W}} \chi(v+w)  \right|.\\
	\intertext{By applying the Cauchy-Schwarz inequality, we get:}
	s(\mathcal{S}_\mathcal{C}^\ast,\chi) &\leq |\mathcal{V}|^{1/2}
	\left(\sum_{v\in\mathcal{V}}\left|\sum_{w\in\mathcal{W}}
	\chi(v+w)\right|^2\right)^{1/2}.\\
	\intertext{We extend the sum over $\mathcal{V}$ to a sum over the linear subspace} 
		\mathcal{V}'&=\left\{\sum_{i=1}^k a_i\beta_i|a_i\in\mathds{F}_q\right\}\\
    \intertext{and get}
	s(\mathcal{S}_\mathcal{C}^\ast,\chi)&\leq (q-1)^{k/2} \left( \sum_{w_1,w_2\in\mathcal{W}}
	\left|\sum_{v\in\mathcal{V}'} \chi(v+w_1)\overline{\chi}(v+w_2)\right|
	\right)^{1/2}.
	\intertext{The absolute value of the sum over $\mathcal{V}'$ is equal to $q^k$ if $w_1=w_2$ and
		at most $2q^{r/2}$ if $w_1\neq w_2$, see \cite[p.8]{w2001}. Hence, we get}
	s(\mathcal{S}_\mathcal{C}^\ast,\chi)	&\leq (q-1)^{k/2} \left( 2(q-1)^{2(r-k)} q^{r/2} + (q-1)^{r-k}q^k
	\right)^{1/2}\\
	&\leq (q-1)^{r/2} \left( 2q^{3r/2 - k} + q^k \right)^{1/2}.\end{align*}
	By choosing $k=\lceil3r/4\rceil$, we get the desired bound.
\end{proof}
Note that for $q\leq 5$, Theorem \ref{char_sum_pg} is asymptotically sharper than the upper bound in \cite[Theorem 3.1]{f-reis},
that is,
\[ \sqrt{3} (q-1)^{r/2} q^{\lceil3r/4\rceil/2} < (2^r-1) q^{r/2}\] for $r$
	large enough. Note also that for $q\le 5$ the right hand side is larger than $(q-1)^r$ and the character sum bound in \cite{f-reis} is trivial.\\ 
 
We note that our character sum bound can be easily extended to hybrid character sums with polynomial arguments along the same lines as \cite{is,msw}. 

The following bound will be useful in later proofs:
\begin{lemm}\label{robin}
	For a positive integer $t$, let $W(t)$ denote the number of squarefree divisors of $t$. For $t\geq3$, we have \[W(t) < t^{0.96/\log\log t} .\]
\end{lemm}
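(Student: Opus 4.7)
The plan is to reduce the statement to a classical bound on $\omega(t)$, the number of distinct prime divisors of $t$. Since every squarefree divisor of $t$ corresponds to a unique subset of the distinct primes dividing $t$, we have $W(t) = 2^{\omega(t)}$. Taking logarithms, the inequality $W(t) < t^{0.96/\log\log t}$ is equivalent to
\[
\omega(t) < \frac{0.96}{\log 2}\cdot\frac{\log t}{\log\log t},
\]
and since $0.96/\log 2 \approx 1.385$, the target constant lies comfortably above the natural asymptotic constant $1$.

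The next step is to bound $\omega(t)$ via the extremal configuration. If $\omega(t) = k$, then $t$ is at least the $k$-th primorial $P_k := p_1 p_2 \cdots p_k$, because this is the smallest integer with $k$ distinct prime factors. Taking logarithms, $\log t \geq \theta(p_k)$, where $\theta$ denotes the first Chebyshev function. Combining explicit Rosser--Schoenfeld-type estimates, such as $\theta(x) \geq x(1 - c/\log x)$ together with $p_k \geq k\log k$ (with explicit thresholds), I would deduce $\log t \geq k\log k\cdot(1 - o(1))$. Solving this asymptotic inequality for $k$ gives
\[
\omega(t) \leq (1+o(1))\frac{\log t}{\log\log t},
\]
and multiplying by $\log 2 < 0.96$ yields the desired bound for all $t$ above some explicit threshold.

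For the finitely many remaining small values of $t$, the inequality is verified by direct numerical check. It is enough to test the claim at primorials (and a small neighbourhood thereof), since these are precisely the integers where $\omega(t)$ is maximal for its size, and $t \mapsto \omega(t)/\log\log t$ attains its local maxima along this sequence.

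The main obstacle is making the constant $0.96$ effective starting from $t = 3$: the asymptotic argument produces any constant strictly greater than $\log 2 \approx 0.693$, so there is ample slack in the limit, but making the transition between the asymptotic and numerical regimes fully explicit requires care in choosing the Rosser--Schoenfeld constants. This bound is in fact classical (Robin, and Nicolas--Robin), and the specific value $0.96$ is precisely the one produced by their analysis, so the cleanest write-up is either to quote their result directly or to carry out the sketch above with explicit constants.
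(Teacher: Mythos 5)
Your proposal is correct and lands on essentially the same approach as the paper: the paper's proof is simply the observation that $W(t)=2^{\omega(t)}$ combined with Robin's explicit bound $\omega(t)\le 1.3841\,\log t/\log\log t$ for $t\ge 3$ (his Theorem~11), and since $1.3841\cdot\log 2<0.96$ the stated inequality follows immediately --- exactly the ``quote Robin directly'' option you identify as cleanest. Your from-scratch sketch is a faithful outline of how Robin's theorem itself is proved, though as you note it would still need the explicit Rosser--Schoenfeld constants and the finite check to reach the constant $0.96$ all the way down to $t=3$.
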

\begin{proof}
	This is a direct consequence of \cite[Theorem 11]{robin}.
\end{proof}
\section{Results for Primitive Elements Avoiding Affine Hyperplanes}\label{sec3}
\begin{theorem}\label{prim_el_pg}
	Let $\mathcal{C} = \{\mathcal{A}_1, ..., \mathcal{A}_r\}$ be a set of
	$\mathds{F}_q$-affine hyperplanes of $\mathds{F}_{q^r}$ in general position, and
	$\mathcal{P}(\mathcal{S}_\mathcal{C}^\ast)$ the number of primitive elements in
	$\mathcal{S}_\mathcal{C}^\ast$. Then we obtain: \[\frac{q^r-1}{\varphi(q^r-1)}
	\mathcal{P}(\mathcal{S}_\mathcal{C}^\ast) >
	(q-1)^r-\sqrt{3}(q-1)^{r/2}q^{\lceil 3r/4\rceil /2}\cdot W(q^r-1) \ ,\] where $W(q^r-1)$ is the
	number of squarefree divisors of $q^r-1$.
\end{theorem}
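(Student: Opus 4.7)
The plan is to deduce this estimate by inserting Vinogradov's formula (Lemma~\ref{vinogradov}) into the set $\mathcal{S}_\mathcal{C}^\ast$, isolating the contribution of the trivial character, and bounding the remaining terms with Theorem~\ref{char_sum_pg}.

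First I would apply Lemma~\ref{vinogradov} with $U=\mathcal{S}_\mathcal{C}^\ast\subseteq\mathds{F}_{q^r}^\ast$ (using that the underlying field is $\mathds{F}_{q^r}$, so the outer factor becomes $\varphi(q^r-1)/(q^r-1)$) and multiply through by $(q^r-1)/\varphi(q^r-1)$ to get
\[
\frac{q^r-1}{\varphi(q^r-1)}\mathcal{P}(\mathcal{S}_\mathcal{C}^\ast)
=\sum_{d\mid q^r-1}\frac{\mu(d)}{\varphi(d)}\sum_{\substack{\chi\in\widehat{\mathds{F}}_{q^r}^\ast\\ \mathrm{ord}\,\chi=d}}\sum_{x\in\mathcal{S}_\mathcal{C}^\ast}\chi(x).
\]
The factor $\mu(d)$ already restricts the outer sum to squarefree divisors of $q^r-1$, which is where the quantity $W(q^r-1)$ will eventually appear.

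Next I would split off the term $d=1$, which is the only trivial character $\chi_0$ and contributes
\[
|\mathcal{S}_\mathcal{C}^\ast|=\prod_{i=1}^r\#\{a_i\in\mathds{F}_q:a_i\neq c_i\}=(q-1)^r,
\]
using the product description of $\mathcal{S}_\mathcal{C}^\ast$ from the beginning of Section~\ref{sec2}. (The issue of whether $0\in\mathcal{S}_\mathcal{C}^\ast$ is irrelevant, since $\chi_0(0)=0$ and any extra element would only strengthen the inequality.)

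For every other squarefree $d>1$ dividing $q^r-1$, each character $\chi$ of order $d$ is non-trivial, so Theorem~\ref{char_sum_pg} yields $|s(\mathcal{S}_\mathcal{C}^\ast,\chi)|\leq\sqrt{3}(q-1)^{r/2}q^{\lceil 3r/4\rceil/2}$. Since there are exactly $\varphi(d)$ characters of order $d$, the $\varphi(d)$ in the denominator cancels, giving for each such $d$ a contribution of absolute value at most $\sqrt{3}(q-1)^{r/2}q^{\lceil 3r/4\rceil/2}$. Summing over the $W(q^r-1)-1$ squarefree divisors $d>1$ of $q^r-1$ and applying the triangle inequality, I obtain
\[
\frac{q^r-1}{\varphi(q^r-1)}\mathcal{P}(\mathcal{S}_\mathcal{C}^\ast)
\geq (q-1)^r-(W(q^r-1)-1)\sqrt{3}(q-1)^{r/2}q^{\lceil 3r/4\rceil /2},
\]
which is strictly larger than the stated right-hand side. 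The main (and essentially only) work has been offloaded to Theorem~\ref{char_sum_pg}; the present argument is a bookkeeping assembly of Vinogradov's identity with that character sum bound, and I do not expect a genuine obstacle beyond keeping careful track of the trivial character and the squarefree support of $\mu$.
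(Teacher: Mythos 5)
Your proof is correct and follows essentially the same route as the paper: Vinogradov's formula, splitting off the trivial character, applying Theorem~\ref{char_sum_pg} to the $\varphi(d)$ characters of each squarefree order $d>1$, and counting these divisors by $W(q^r-1)-1$. The only cosmetic difference is that the paper inserts an explicit $-1$ for the possible exclusion of $0$ from $\mathcal{S}_\mathcal{C}^\ast$ and then absorbs it into the strict inequality, whereas you absorb the same slack through $W(q^r-1)-1 < W(q^r-1)$; both are valid since the error term exceeds $1$.
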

\begin{proof}
	 {By Vinogradov's formula, Lemma~\ref{vinogradov}, we obtain \[\frac{q^r-1}{\varphi(q^r-1)}\mathcal{P}(\mathcal{S}_\mathcal{C}^\ast) = \sum_{d|(q^r-1)} \frac{\mu(d)}{\varphi(d)} \sum_{\substack{\chi\in\widehat{\mathds{F}}_{q^r}^\ast\\ \text{ord}(\chi) =
			d}} \sum_{\omega\in\mathcal{S}_\mathcal{C}^\ast} \chi(\omega) \ .\]
   Separating the contribution of the trivial character, resolving the sums and using the character sum estimate from Theorem \ref{char_sum_pg} we get
 \begin{align*}
     \frac{q^r-1}{\varphi(q^r-1)}\mathcal{P}(\mathcal{S}_\mathcal{C}^\ast) &\geq (q-1)^r - 1 - \sum_{\substack{d|(q^r-1)\\d \neq 1\\ \mu(d)\neq 0}} \frac{1}{\varphi(d)} \sum_{\substack{\chi\in\widehat{\mathds{F}}_{q^r}^\ast\\ \text{ord}(\chi) = d}} (\sqrt{3}
	(q-1)^{r/2} q^{\lceil3r/4\rceil/2})\\
    &= (q-1)^r - 1 - \sqrt{3}
	(q-1)^{r/2} q^{\lceil3r/4\rceil/2} \cdot \sum_{\substack{d|(q^r-1)\\d \neq 1\\ \mu(d)\neq 0}} 1\\
    &> (q-1)^r-\sqrt{3}(q-1)^{r/2}q^{\lceil 3r/4\rceil /2}\cdot W(q^r-1),
 \end{align*}
 which completes the proof.}
\end{proof}
By shifting this inequality and applying the bound from Lemma \ref{robin} for
$W(q^r-1)$, we get the following condition for the existence of a primitive
element in
$\mathcal{S}_\mathcal{C}^\ast$:\begin{equation}\label{prim_el_ineq_pg}
\left( \sqrt{3}\cdot q^{0.96r/\log\log q^r} \right)^{1/r} \leq
\frac{(q-1)^{1/2}}{q^{\lceil3r/4\rceil/2r}} .
\end{equation} In this form, we see that the left hand side of
the inequality is continously decreasing. The right hand side does converge for large values of $r$. We obtain the following
values for its limit:\\
\ \\
	{\large \begin{tabularx}{\textwidth} {
			| >{\centering\arraybackslash}X 
			|| >{\centering\arraybackslash}X
			| >{\centering\arraybackslash}X 
			| >{\centering\arraybackslash}X | }
		\hline 
		$q$ & $3$ & $4$ & $5$ \\
		\hline
		$\lim\limits_{r\rightarrow\infty}\frac{(q-1)^{1/2}}{q^{\lceil3r/4\rceil/2r}}$ & 
		$\frac{\sqrt{2}}{3^{3/8}}$ &
		$\frac{\sqrt{3}}{2^{3/4}}$ & $\frac{2}{5^{3/8}}$ \\
		\hline
		{\scriptsize Approx. Num. Value} & 
		$0.936687$
		& $1.02988$ 
		& $1.09375$ \\
		\hline
	\end{tabularx}}
\ \\ \ \\
For the left hand side of inequality \eqref{prim_el_ineq_pg}, we obtain the
following result:
\begin{lemm}\label{f_1_inf}
 \[\lim_{r\rightarrow\infty} \left(
	 \sqrt{3}\cdot q^{0.96r/\log\log q^r} \right)^{1/r} = 1.\]
\end{lemm}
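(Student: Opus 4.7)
The plan is to take logarithms to convert the expression into a sum and show each summand vanishes in the limit. Writing
\[
\log\!\left(\bigl(\sqrt{3}\cdot q^{0.96r/\log\log q^r}\bigr)^{1/r}\right)
= \frac{\log 3}{2r} + \frac{0.96\,\log q}{\log\log q^r},
\]
it suffices, by continuity of $\exp$, to prove that this right-hand side tends to $0$ as $r\to\infty$.

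The first term $\tfrac{\log 3}{2r}$ obviously tends to $0$ as $r\to\infty$, since $q$ (and hence $\log 3$) is fixed. For the second term, I would simply use $\log\log q^r = \log(r\log q)$, which diverges to $+\infty$ as $r\to\infty$ for every fixed $q\ge 2$; since the numerator $0.96\log q$ is a fixed positive constant, the quotient tends to $0$.

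Combining both, the argument of $\exp$ tends to $0$, hence the original expression tends to $e^0=1$.

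There is no substantive obstacle here; the only point worth noting is that the quantity $q$ is treated as a fixed constant (independent of $r$), which is consistent with the setting of the paper, where asymptotic statements are understood for fixed $q\in\{3,4,5\}$ and $r\to\infty$.
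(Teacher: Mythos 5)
Your proof is correct and is essentially the same as the paper's: both split the expression into the $\sqrt{3}$-part (which tends to $1$, or whose logarithm tends to $0$) and the $q^{0.96/\log\log q^r}$-part, observing that $\log\log q^r\to\infty$ forces the exponent to vanish. Passing through logarithms rather than working directly with the product is a purely cosmetic difference.
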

\begin{proof}
	\begin{align*}
	\lim_{r\rightarrow\infty} \left( \sqrt{3}\cdot q^{0.96r/(\log\log (q^r))}
	\right)^{1/r} &= \lim_{r\rightarrow\infty} \ \underbrace{(\sqrt{3})^{1/r}}_{\rightarrow 1}
	\cdot \left( q^ {0.96r / (\log\log(q^r)) }  \right)^{1/r}\\
	&= \lim_{r\rightarrow\infty} \   q^ {0.96 / (\log\log(q^r)) } = 1.
	\end{align*}
	Since $\log\log(q^r)\rightarrow\infty$, the exponent approaches zero, and thus
	the term approaches $1$.
\end{proof}
Combining this Lemma with the values from the above table directly gives
us the following result:
\begin{theorem}
	Let $r \geq 2$ be a positive integer and let $\mathcal{C} = \{\mathcal{A}_1,
	..., \mathcal{A}_r\}$ be a set of $\mathds{F}_q$-affine hyperplanes of
	$\mathds{F}_{q^r}$ in general position. Then the set
	$\mathcal{S}_\mathcal{C}^\ast = \mathds{F}_{q^r} \setminus
	\bigcup_{i=1}^r\mathcal{A}_i$ contains a primitive element of $\mathds{F}_{q^r}$
	if $q=4$ or $q=5$ and $r$ is large enough.\qed
\end{theorem}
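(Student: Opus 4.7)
The plan is to combine Theorem~\ref{prim_el_pg}, the divisor bound in Lemma~\ref{robin}, and the asymptotic identification in Lemma~\ref{f_1_inf}. By Theorem~\ref{prim_el_pg}, the set $\mathcal{S}_\mathcal{C}^\ast$ contains a primitive element of $\mathds{F}_{q^r}$ whenever
\[
(q-1)^r > \sqrt{3}\,(q-1)^{r/2} q^{\lceil 3r/4\rceil/2}\, W(q^r-1).
\]
Dividing through by $(q-1)^{r/2} q^{\lceil 3r/4\rceil/2}$ and taking $r$th roots, this is equivalent to inequality~\eqref{prim_el_ineq_pg} after using Lemma~\ref{robin} to replace $W(q^r-1)$ by the upper bound $(q^r)^{0.96/\log\log q^r}$. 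So the task reduces to verifying that, for $r$ sufficiently large, the left-hand side of~\eqref{prim_el_ineq_pg} is dominated by the right-hand side.

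Next, I would observe that by Lemma~\ref{f_1_inf} the left-hand side converges to~$1$ as $r\to\infty$, regardless of $q$. For the right-hand side, the exponent $\lceil 3r/4\rceil/(2r)$ converges to $3/8$, so the limit equals $(q-1)^{1/2}/q^{3/8}$. Plugging in $q=4$ gives $\sqrt{3}/2^{3/4}\approx 1.02988$, and plugging in $q=5$ gives $2/5^{3/8}\approx 1.09375$, both strictly greater than~$1$, exactly as recorded in the preceding table.

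Since the left-hand side of~\eqref{prim_el_ineq_pg} tends to~$1$ from above while the right-hand side tends to a limit strictly greater than~$1$, there exists $r_0=r_0(q)$ such that~\eqref{prim_el_ineq_pg} holds for every $r\ge r_0$. For such $r$, Theorem~\ref{prim_el_pg} guarantees $\mathcal{P}(\mathcal{S}_\mathcal{C}^\ast)>0$, which yields the desired primitive element.

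The argument is almost entirely bookkeeping once Theorem~\ref{char_sum_pg} and Lemma~\ref{robin} are in hand; no additional obstacle arises beyond checking that the numerical limit on the right remains above~$1$ for both values of~$q$. The genuine threshold of the method is visible in the table: for $q=3$ the limit $\sqrt{2}/3^{3/8}\approx 0.9367$ falls below~$1$, which is precisely why the case $q=3$ cannot be handled by this route and requires the sharper character sum bound of Iyer and Shparlinski mentioned in the introduction.
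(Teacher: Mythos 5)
Your proposal is correct and follows essentially the same route as the paper: Theorem~\ref{prim_el_pg} plus Lemma~\ref{robin} give inequality~\eqref{prim_el_ineq_pg} as a sufficient condition, and Lemma~\ref{f_1_inf} together with the tabulated limits $\sqrt{3}/2^{3/4}>1$ and $2/5^{3/8}>1$ settles it for large $r$. Nothing further is needed.
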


It is evident from the table that this bound does not yield any result for
$q=3$, since the right hand side of Inequality \ref{prim_el_ineq_pg} converges to a value of
approximately $0.936687$, while the left hand side still approaches $1$ for
large values of $r$. However, a recent result by Iyer and Shparlinski allows us to obtain results for $q=3$ as well:
\begin{lemm}[{\cite[Cor. 3.3]{is}}]
Let $q=3$ and $\mathcal{A}=\{0,2\}$, let $\chi$ be a non-principal multiplicative character, and let $f(X)\in\mathds{F}_{q^r}(X)$ a rational function of degree at most $d$ which is not an $n$-th power of some rational function in $\overline{\mathds{F}_p}(X)$. Then we have \[\sum_{\nu\in\mathcal{S}_r(\mathcal{A})} \chi(f(\nu)) \ll 2^{\gamma r} ,\] where \[\gamma = 1-\frac{\log{3}}{\log{2}}\cdot \kappa_5 \left( \frac{\log{2}}{\log{3}}\right) = 0.99128 . . . \] and \[\mathcal{S}_r(\mathcal{A}) = \left\{\left.\sum_{i=1}^r a_i \beta_i \right| a_1,...,a_r\in\mathcal{A}\right\}.\]
\end{lemm}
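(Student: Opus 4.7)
The plan is to follow the Bourgain--Iyer--Shparlinski framework for multiplicative character sums over sparse subsets of $\mathds{F}_{q^r}$. Since $\mathcal{S}_r(\mathcal{A})$ has cardinality $2^r$ and the tensor-product structure $\mathcal{A}^r$ in the basis $\{\beta_1,\ldots,\beta_r\}$, a bound of the form $2^{\gamma r}$ with $\gamma<1$ represents a power saving over the trivial estimate $|\mathcal{S}_r(\mathcal{A})|$; the proof must therefore combine analytic cancellation coming from the character $\chi\circ f$ with a combinatorial input on the additive structure of the set.

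The first step is an application of H\"older's inequality with an even exponent $2k$, where the subscript $5$ in $\kappa_5$ suggests the relevant choice $k=5$:
\begin{equation*}
\left|\sum_{\nu\in\mathcal{S}_r(\mathcal{A})}\chi(f(\nu))\right|^{2k}\leq |\mathcal{S}_r(\mathcal{A})|^{2k-1}\sum_{\nu_1,\ldots,\nu_{2k}\in\mathcal{S}_r(\mathcal{A})}\chi\!\left(\prod_{j=1}^{k}\frac{f(\nu_j)}{f(\nu_{k+j})}\right).
\end{equation*}
Grouping the $2k$-tuples by the signed sum $\tau=\sum_{j=1}^{k}\nu_j-\sum_{j=1}^{k}\nu_{k+j}$, each inner sum reduces, after a shift of variable, to a complete character sum on $\mathds{F}_{q^r}$ against a rational function that, by the non-$n$-th-power hypothesis on $f$, remains non-trivial for generic $\tau$. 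The Weil bound then yields cancellation of size $\ll d\cdot q^{r/2}=d\cdot 3^{r/2}$ on those shifts.

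The remaining contribution is controlled by the $2k$-th additive energy $E_{2k}(\mathcal{S}_r(\mathcal{A}))$, which counts the representations of each $\tau$. Because $\mathcal{S}_r(\mathcal{A})=\mathcal{A}^r$, this energy splits as the $r$-th power of the corresponding energy of $\mathcal{A}\subset\mathds{F}_3$, and the auxiliary function $\kappa_5$ packages the optimal exponent produced by this count. Balancing the Weil cancellation $3^{r/2}$ against $E_{2k}(\mathcal{A})^r$ and taking $(2k)$-th roots yields an exponent of the form $\gamma=1-(\log 3/\log 2)\,\kappa_5(\log 2/\log 3)$, the factor $\log 3/\log 2$ converting the base-$3$ Weil saving into the base-$2$ normalization dictated by $|\mathcal{S}_r(\mathcal{A})|=2^r$.

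The main obstacle is the sharp evaluation of $E_{2k}(\mathcal{A})$ and the subsequent optimization in $k$: the H\"older expansion, the tensor factorization, and the Weil bound on complete character sums along rational functions are all standard, but it is this additive-combinatorial input---encoded by $\kappa_5$---that produces the specific numerical constant $\gamma=0.99128\ldots$ and makes the bound non-trivial. A secondary technical point is verifying that the non-$n$-th-power hypothesis on $f$ propagates through the H\"older expansion to the shifted rational functions for all but a negligible set of $\tau$, so that the Weil bound may indeed be applied uniformly in the generic case.
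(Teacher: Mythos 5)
First, note that the paper does not prove this statement at all: it is quoted verbatim as Corollary~3.3 of Iyer and Shparlinski \cite{is}, so there is no internal proof to compare against; what you are really attempting is a reconstruction of their argument. Your general orientation --- an even-moment (H\"older/Karatsuba) method combining Weil-type cancellation for complete sums with an additive-energy count for the product set $\mathcal{A}^r$ --- is the right family of techniques for bounds of this shape. However, as written your argument has a genuine gap at the very first step. The displayed ``H\"older'' inequality is not an inequality obtained from H\"older at all: the right-hand sum $\sum_{\nu_1,\dots,\nu_{2k}}\chi\bigl(\prod_j f(\nu_j)/f(\nu_{k+j})\bigr)$ is identically equal to $\bigl|\sum_\nu \chi(f(\nu))\bigr|^{2k}$ (wherever $f$ is nonzero), so your displayed bound reads $S^{2k}\le |\mathcal{S}_r(\mathcal{A})|^{2k-1}S^{2k}$ and carries no information. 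The standard method requires first exhibiting bilinear structure, e.g.\ splitting $\mathcal{S}_r(\mathcal{A})=\mathcal{V}+\mathcal{W}$ along a partition of the basis (exactly as in the proof of Theorem~\ref{char_sum_pg} of this paper) and applying H\"older only to the outer variable; without that there is nothing to expand. Relatedly, ``grouping by the signed sum $\tau$'' does not convert the inner sums into complete character sums over $\mathds{F}_{q^r}$: completeness has to be manufactured, typically by extending one factor of the sumset to a full $\mathds{F}_q$-subspace or by introducing an auxiliary averaging variable, and that is where the Weil bound actually enters.

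Second, even granting a corrected skeleton, your sketch never produces the claimed constant. The entire content of the lemma is the specific exponent $\gamma=1-\frac{\log 3}{\log 2}\,\kappa_5\!\left(\frac{\log 2}{\log 3}\right)=0.99128\ldots$, which comes from a nontrivial optimization (the function $\kappa_5$ of Iyer--Shparlinski) over the splitting parameter and the moment order, together with sharp counts for the relevant system of equations over $\mathcal{A}^r$. You explicitly defer this (``the main obstacle is the sharp evaluation of $E_{2k}$\dots''), but a proof that defers its own main obstacle is a plan, not a proof. The distinctness/non-power verification needed before Weil can be applied to the shifted products of $f$ is likewise not ``secondary'': for a general rational function $f$ it requires a careful argument about the multiplicities of zeros and poles. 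To turn this into an actual proof you would either have to carry out the Iyer--Shparlinski computation in full or, as the paper does, simply cite it.
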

The choice
$$f(X)=X+\sum_{i=1}^r (c_i-1)\beta_i$$
implies
$$\mathcal{S}_r(\mathcal{A})=\mathcal{S}_\mathcal{C}^\ast.$$
Employing this estimate for the character sum and combining with Theorem \ref{prim_el_pg} yields the condition\[\left(q^{0.96r/\log\log{q^r}}\right)^{1/r} \leq \frac{q-1}{2^\gamma} ,\] which obviously holds for $q=3$ and $r$ large enough.

Combining the above results with Theorem 1.1 from \cite{f-reis} allows us to state the following general result:
\begin{corr}
    Let $q\geq 3$ be a prime power and $r \geq 2$ an integer, and let $\mathcal{C} = \{\mathcal{A}_1,
	..., \mathcal{A}_r\}$ be a set of $\mathds{F}_q$-affine hyperplanes of
	$\mathds{F}_{q^r}$ in general position. Then the set
	$\mathcal{S}_\mathcal{C}^\ast = \mathds{F}_{q^r} \setminus
	\bigcup_{i=1}^r\mathcal{A}_i$ contains a primitive element of $\mathds{F}_{q^r}$
	if $r$ is large enough.\qed
\end{corr}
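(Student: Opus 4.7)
My plan is to establish the corollary by a direct case analysis on the prime power $q$, in each case invoking an existence result that has either been proved above or already exists in the literature. The corollary is essentially a packaging statement, so the work lies in verifying that the three ranges of $q$ together cover all prime powers $q\ge 3$ and that each range is handled by a result available to us.

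First I would dispose of the range $q\ge 7$: for these $q$ the claim is precisely the content of \cite[Thm.~1.1]{f-reis}, which asserts the existence of a primitive element in $\mathcal{S}_\mathcal{C}^\ast$ for every sufficiently large $r$. Next I would treat the cases $q=4$ and $q=5$ by appealing to the theorem just preceding the corollary, which was proved by combining the character sum bound of Theorem~\ref{char_sum_pg} with Vinogradov's formula (Lemma~\ref{vinogradov}), the bound on $W(q^r-1)$ from Lemma~\ref{robin}, and the numerical values in the table that ensure the right-hand side of inequality \eqref{prim_el_ineq_pg} exceeds~$1$ in the limit while the left-hand side tends to $1$ by Lemma~\ref{f_1_inf}.

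For the remaining case $q=3$ I would use the Iyer--Shparlinski character sum estimate from \cite[Cor.~3.3]{is}, applied with the rational function $f(X)=X+\sum_{i=1}^r(c_i-1)\beta_i$ so that $\mathcal{S}_r(\mathcal{A})=\mathcal{S}_\mathcal{C}^\ast$, as already noted in the text. Plugging this bound into Theorem~\ref{prim_el_pg} in place of the bound from Theorem~\ref{char_sum_pg}, the sufficient condition for a primitive element becomes
\[
\left(q^{0.96r/\log\log q^r}\right)^{1/r}\le \frac{q-1}{2^{\gamma}},
\]
with $\gamma=0.99128\ldots$, and the left-hand side tends to $1$ (exactly as in Lemma~\ref{f_1_inf}) while the right-hand side equals $2/2^{\gamma}>1$ for $q=3$; hence the condition is satisfied for all sufficiently large $r$.

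The combination of the three steps exhausts all prime powers $q\ge 3$, yielding the corollary. No serious obstacle remains: the only subtlety is bookkeeping, namely ensuring that the ``sufficiently large $r$'' threshold supplied by each of the three sources can be taken uniformly for the finitely many small $q$ not covered by \cite{f-reis}, which is immediate since each threshold is just a single explicit constant depending on~$q$.
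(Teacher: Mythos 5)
Your proposal is correct and follows exactly the paper's intended argument: the corollary is a packaging of the three cases $q\ge 7$ (Fernandes--Reis, \cite[Thm.~1.1]{f-reis}), $q\in\{4,5\}$ (the preceding theorem), and $q=3$ (the Iyer--Shparlinski bound fed into Theorem~\ref{prim_el_pg}), which is why the paper states it with a \qed and no separate proof. Your closing remark about uniformity is not even needed, since $q$ is fixed in the statement and the threshold for $r$ may depend on it.
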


 {As mentioned in the introduction, this problem is trivial for $q=2$. In this case, a different approach may be used to get small sets containing primitive elements: One might employ the notion of $s$-sparse elements and give an estimate on the possible values of $s$ such that the set of $s$-sparse elements contains a primitive element. This can be done using a recent bound on character sums of M\'erai, Shparlinski, and the second author \cite{msw}.}

\bibliography{literature}{}

\providecommand{\bysame}{\leavevmode\hbox to3em{\hrulefill}\thinspace}
\providecommand{\MR}{\relax\ifhmode\unskip\space\fi MR }
\providecommand{\MRhref}[2]{%
  \href{http://www.ams.org/mathscinet-getitem?mr=#1}{#2}
}
\providecommand{\href}[2]{#2}
\begin{thebibliography}{Win01}

\bibitem[FR21]{f-reis}
Arthur Fernandes and Lucas Reis, \emph{On primitive elements of finite fields avoiding affine hyperplanes}, Finite Fields and Their Applications \textbf{76} (2021), 101911.

\bibitem[HJ20]{HJ}
Dirk Hachenberger and Dieter Jungnickel, \emph{Topics in {G}alois fields}, Algorithms and Computation in Mathematics, vol.~29, Springer, Cham, 2020. \MR{4233161}

\bibitem[IS24]{is}
Siddharth Iyer and Igor~E. Shparlinski, \emph{Character sums over elements of extensions of finite fields with restricted coordinates}, Finite Fields and Their Applications \textbf{93} (2024), 102335.

\bibitem[Lan27]{landau}
Edmund Landau, \emph{Aus der analytischen {Z}ahlentheorie}, Vorlesungen \"{u}ber Zahlentheorie, vol.~2, Chelsea Publishing Company, {1969 [1927]}.

\bibitem[MSW]{msw}
L\'{a}szl\'{o} M\'{e}rai, Igor~E. Shparlinski, and Arne Winterhof, \emph{Character sums over sparse elements of finite fields}, Bulletin of the London Mathematical Society, to appear. Preprint: arXiv:2211.08452.

\bibitem[Rob83]{robin}
Guy Robin, \emph{{Estimation de la fonction de Tchebychef $\theta$ sur le k-i\`eme nombre premier et grandes valeurs de la fonction $\omega(n)$ nombre de diviseurs premiers de $n$}}, Acta Arithmetica \textbf{42} (1983), 367--389.

\bibitem[Win01]{w2001}
Arne Winterhof, \emph{{Incomplete additive character sums and applications}}, Finite Fields and Applications (Dieter Jungnickel and Harald Niederreiter, eds.), Springer, 2001, pp.~462--474.

\end{thebibliography}

\end{document}